\newtheorem{thm}{Theorem}[section]
\newtheorem{lemma}{Lemma}[section]
\newtheorem{defn}{Definition}[section]
\title{A note on locally metric connections}
\author{  Mihail Cocos
	\thanks{Mathematics Subject
		Classification: Primary 53B05 ; Secondary 53C05 }}
\date{\today}
\begin{document}
	\maketitle

\begin{abstract}
In this paper we give necessary and sufficient conditions for a connection in a plane bundle above a surface to be locally metric. These conditions are easy to be verified using any local frame. Also, as a global result, we give a  necessary condition for two connections to be metric equivalent in terms of their Euler class.
\end{abstract}

\section{Introduction: Locally metric connections}

Throughout this paper $E$ will denote a real vector bundle of rank $m$ over a manifold $M$  of dimension $n.$

\begin{defn}
	A connection $D$ in $E$ is called  locally metric if and only if there is a bundle metric that is parallel with respect to $D$ in the neighborhood of any point $p \in M.$
\end{defn}
\noindent It is not hard to prove that a connection is locally metric if its restricted holonomy group $Hol_0(D)$ is pre-compact(see \cite{BM}). However, since the explicit calculation of holonomy is very difficult, we will not make use of this variant of the definition. The literature for locally metric connections is scarce. There are not very many practical criteria to verify whether a connection is locally metric. Some promising results can be found in (\cite{RA1,RA2,VZ1,VZ2,T,NC,K}).
From a pure mathematical point of view, the investigation of the set of symmetric and locally metric connections in the tangent bundle of a manifold is related to a long standing conjecture of Chern for affinely flat manifolds. Chern's statement conjectures that the Euler characteristic of a compact affinely flat manifold is zero. The author showed in  (\cite{MC}) that for a locally metric connection there is a natural cohomology class of $M$ that coincides with the Euler class of the bundle $E$ in the case when the connection is globally metric. He also proved that if the set of locally metric connections in the tangent bundle of an affinely flat manifold is path connected, then Chern's statement is true(see \cite{MC}).\\
If two symmetric and locally metric connections in the tangent bundle of a manifold $M$ share a parallel metric in the neighborhood of any point then, by the fundamental theorem of Riemannian geometry, they are equal. If the symmetry is dropped one can no longer conclude that they are equal and the relationship becomes a weaker equivalence relation. The global result of this paper gives a necessary condition for two connections to be equivalent in terms of their Euler class.

\bigskip

We will now establish some conditions for a connection to be metric in terms of local frames.
\begin{lemma} \label{skew1}
Let $D$ be connection in $E.$ Then $D$ is locally metric if and only if in the neighborhood of any point there exist a local frame of the bundle $$\sigma=(\sigma_1,\sigma_2,...,\sigma_m)$$
such that the connection matrix $ \theta$  with respect to $\sigma$ is skew symmetric.

\end{lemma}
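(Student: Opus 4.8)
The plan is to prove both implications by the same direct computation, which ties the parallelism of a bundle metric to the skew-symmetry of the connection matrix in a frame that is orthonormal for that metric. The computational engine is the compatibility condition: a bundle metric $g$ is parallel with respect to $D$ exactly when
$$X\,g(s,t) = g(D_X s, t) + g(s, D_X t)$$
for all sections $s,t$ of $E$ and all vector fields $X$ on the base. First I would record the standard fact that the expression $X\,g(s,t) - g(D_X s, t) - g(s, D_X t)$ is $C^\infty$-linear in both $s$ and $t$; consequently $g$ is parallel if and only if this expression vanishes when $s$ and $t$ range over the elements of a single fixed local frame. This reduces the whole statement to a finite identity evaluated on frame sections.

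For the forward direction, I would assume $D$ is locally metric, so that near an arbitrary point there is a parallel metric $g$. Applying the Gram--Schmidt process fibrewise to any smooth local frame produces a smooth orthonormal frame $\sigma = (\sigma_1,\dots,\sigma_m)$ with $g(\sigma_i,\sigma_j) = \delta_{ij}$. Then $X\,g(\sigma_i,\sigma_j) = X(\delta_{ij}) = 0$, while expanding $D_X\sigma_j = \sum_i \theta^i_j(X)\,\sigma_i$ gives $g(D_X\sigma_i,\sigma_j) + g(\sigma_i,D_X\sigma_j) = \theta^j_i(X) + \theta^i_j(X)$. Parallelism of $g$ forces this sum to vanish for every $X$, which is precisely the condition $\theta^i_j = -\theta^j_i$, i.e. that $\theta$ is skew-symmetric.

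For the converse, I would start from a frame $\sigma$ whose connection matrix $\theta$ is skew-symmetric, and define a metric $g$ on that neighborhood by declaring $\sigma$ to be orthonormal, $g(\sigma_i,\sigma_j) = \delta_{ij}$, extended bilinearly; this is manifestly a smooth, symmetric, positive-definite bundle metric. Running the same computation in reverse yields $X\,g(\sigma_i,\sigma_j) - g(D_X\sigma_i,\sigma_j) - g(\sigma_i,D_X\sigma_j) = -\bigl(\theta^j_i(X) + \theta^i_j(X)\bigr) = 0$, and by the tensoriality noted above this shows $Dg = 0$. Hence $g$ is parallel near the given point and $D$ is locally metric.

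The calculations are routine once conventions are fixed, so I do not expect a serious obstacle. The points that demand care are the index bookkeeping in $D\sigma_j = \sum_i \theta^i_j\,\sigma_i$, so that the compatibility condition genuinely produces the symmetric combination $\theta^i_j + \theta^j_i$ and matches the antisymmetry $\theta^i_j = -\theta^j_i$ meant by ``skew-symmetric matrix of $1$-forms''; the reduction of $Dg = 0$ to frame elements via tensoriality; and the smooth dependence of the orthonormal frame furnished by Gram--Schmidt. The main, though minor, difficulty is keeping the upper/lower index placement consistent throughout.
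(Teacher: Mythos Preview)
Your proposal is correct and follows essentially the same route as the paper: for the ``if'' direction you both declare $\sigma$ orthonormal and compute $(D_Xg)(\sigma_i,\sigma_j)=-(\theta_{ij}(X)+\theta_{ji}(X))=0$, while for the ``only if'' direction the paper simply calls it obvious and you spell out the standard Gram--Schmidt argument. The extra care you take with tensoriality and index conventions is fine but not strictly necessary for this lemma.
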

\begin{proof}

The only if part is obvious. For the if part let $$\sigma=(\sigma_1,\sigma_2,...,\sigma_m)$$ as in hypothesis. Take the metric $g$ that makes the frame $\sigma$ orthonormal, that is

$$g(\sigma_i,\sigma_j)=\delta_{ij}.$$ Differentiating $g$ in the direction $X \in TM,$ we get $$ (D_Xg)(\sigma_i,\sigma_j)=0-\left(\theta_{ij}(X)+\theta_{ji}(X)\right) = 0,$$ hence $g$ is parallel with respect to $D.$

\end{proof}

For the sake of concreteness let us give a couple of examples of connections which are relevant to the theoretical results of this paper. Let $M=\mathbb{T}^2$ be the two dimensional torus and let $f=(f_1,f_2)$ a global frame of commuting vector fields in $TM.$ The dual frame will be denoted $f^*=(f^1,f^2).$
Consider the connection $D$ that has  its connection matrix with respect to $f$  

$$ \theta=\begin{Vmatrix}
f^1 & 0\\ 0 & 0
\end{Vmatrix}$$
This is a flat, symmetric connection in $TM$ hence locally metric. However there is no global metric on $M$ that will have $D$ as its Levi Civita connection. Assume, by contradiction, that there is a global metric $g$ that is preserved by $D$ and consider $\gamma=\gamma(t), \ \ -\infty< t<\infty$  an integral curve of $f_1.$ On $M$ we have a globally defined smooth function defined as $$ h=g(f_1,f_1),$$ and if we look at its restriction $h(t)$ to $\gamma$, then $h(t)$ satisfies the differential equation $$h'(t)=2h(t),$$ and hence it is an exponential. Consequently $h$ is not bounded on $M$ which gives a contradiction.\\

\bigskip

 Before giving another example we need to make a definition

\begin{defn}\label{metricequiv}
	Let $D$ and $\tilde{D}$ be two locally metric connections in a vector bundle $E.$ We say they are metric equivalent if and only if in the neighborhood of any point there exist a local bundle metric $g$ such that $$ D g=\tilde{D} g=0.$$
\end{defn}

\noindent Our second example is a connection $\tilde{\nabla}$ in the tangent bundle of a generic Riemannian manifold $(M,g)$ that is metric equivalent ($\nabla g \equiv \tilde{\nabla}g \equiv 0$) to the Levi Civita connection $\nabla$ but not symmetric. 
We will base our example on Theorem 2.1 in (\cite{MMT}). Take $u$ a one form on $M$ and let $u^{\#}$ be its metric dual with respect to $g$. Define \begin{equation}\label{MukutConnection}\tilde{\nabla}_XY=\nabla_XY+u(Y)X-g(X,Y)u^{\#}. \end{equation} It is easy to verify that equation (\ref{MukutConnection}) defines a connection compatible with the metric $g$ and that

$$T_{\tilde{\nabla}}(X,Y)=u(Y)X-u(X)Y, $$ 

\noindent and therefore $\tilde{\nabla}$ is non-symmetric if $u \neq 0.$

\bigskip

Next let us note that if a connection preserves a metric (locally) then it also preserves a volume form, hence we have the following criterion

\begin{lemma}\label{VolCrit}

Let $D$ be a connection in a vector bundle $E$, $\theta $ its connection matrix with respect to a local frame $\sigma$ and $\Omega $ its curvature matrix. Then $D$ preserves a local volume form if and only if  $$ Tr \,\, \Omega=d( Tr\ \theta)=0.$$

\end{lemma}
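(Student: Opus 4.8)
The plan is to reduce the statement to a single scalar equation by passing to the determinant line bundle $\Lambda^m E$. First I would record how $D$ acts on the top exterior power. With the convention used earlier (so that $D\sigma_a=\sum_b \theta_{ba}\sigma_b$), differentiating $s:=\sigma_1\wedge\cdots\wedge\sigma_m$ by the Leibniz rule and discarding every wedge with a repeated factor gives
$$Ds=\Big(\sum_a \theta_{aa}\Big)s=(Tr\,\theta)\,s,$$
so the induced connection on $\Lambda^m E$ has the single $1$-form $Tr\,\theta$ as its connection matrix, and the dual connection on $\Lambda^m E^{*}$ has $-Tr\,\theta$. A volume form for $E$ is precisely a nowhere-vanishing section $\mu$ of $\Lambda^m E^{*}$, and $D$ preserving $\mu$ means $D\mu=0$ for this induced connection.

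Next I would translate parallelism into a differential equation. Writing $\mu=f\,s^{*}$ for the dual frame $s^{*}=\sigma^1\wedge\cdots\wedge\sigma^m$ and a nonvanishing function $f$, one computes
$$D\mu=(df-f\,Tr\,\theta)\,s^{*},$$
so $D\mu=0$ is equivalent to $d\log|f|=Tr\,\theta$. Hence $D$ preserves a local volume form on a neighborhood $U$ if and only if $Tr\,\theta$ is exact on $U$. Shrinking $U$ to a contractible neighborhood and invoking the Poincar\'e lemma, exactness is equivalent to closedness, i.e. to $d(Tr\,\theta)=0$; conversely, if such an $f$ exists then $Tr\,\theta=d\log|f|$ is automatically closed. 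This equivalence — local existence of the integrating factor $f$ via Poincar\'e — is the heart of the argument and the only step that genuinely uses that we work locally.

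Finally I would establish the identity $Tr\,\Omega=d(Tr\,\theta)$, which shows that the two vanishing conditions in the statement coincide. From the structure equation $\Omega=d\theta+\theta\wedge\theta$ we get $Tr\,\Omega=Tr(d\theta)+Tr(\theta\wedge\theta)=d(Tr\,\theta)+Tr(\theta\wedge\theta)$. The term $Tr(\theta\wedge\theta)=\sum_{a,b}\theta_{ab}\wedge\theta_{ba}$ vanishes because each $\theta_{ab}$ is a $1$-form, so $\theta_{ab}\wedge\theta_{ba}=-\theta_{ba}\wedge\theta_{ab}$, and relabelling the summation indices shows the sum equals its own negative. Thus $Tr\,\Omega=d(Tr\,\theta)$ identically, and the criterion of the lemma becomes the single condition $d(Tr\,\theta)=0$. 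I expect no serious obstacle beyond being careful with the sign conventions in the induced connection and with the interpretation of ``volume form'' as a parallel section of the determinant bundle.
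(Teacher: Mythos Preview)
Your argument is correct and follows essentially the same route as the paper: both compute the induced derivative of a top form written as $f$ times the frame volume, arrive at the scalar equation $d\log f = Tr\,\theta$, and obtain $Tr\,\Omega = d(Tr\,\theta)$ from the structure equation. Your version is in fact more complete, since you explicitly invoke the Poincar\'e lemma to pass from solvability of $d\log f = Tr\,\theta$ to closedness of $Tr\,\theta$, and you spell out why $Tr(\theta\wedge\theta)=0$, whereas the paper leaves both of these steps implicit.
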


\begin{proof}
	First let us note that $$Tr \,\, \Omega=d( Tr\ \theta)$$ follows immediately from Cartan's equation  $$\Omega=d\theta+\theta \wedge \theta$$ by taking the trace of both sides.
	
	\noindent Next let  $\omega$ be a local volume form and $e=(e_1,e_2,...,e_m)$ be a positive local frame with respect to $\omega.$ We have \begin{equation}\label{volume} \omega= f e^1\wedge e^2 \cdots \wedge e^m,\end{equation} where $ f>0$ is a local smooth function.
Taking the covariant derivative of $\omega$ in the direction of the vector field $X$ we get

\begin{equation}\label{volume2}  D_X \omega =X(f)e^1\wedge e^2 \cdots \wedge e^m +f\sum_{k=1}^m e^1\wedge e^2 \cdots \wedge D_X e^k \wedge \cdots \wedge e^m .\end{equation}

Since $$ D_X e^k= -\theta_{sk}(X) e^s$$ and by using (\ref{volume2}) we obtain \begin{equation}\label{volume3}
D_X\omega=(X(f)-f\sum_{k=1}^m \theta_{kk}(X)) e^1\wedge e^2 \cdots \wedge e^m \end{equation} which shows that $$ D_X\omega =0$$ is equivalent to \begin{equation}\label{volume4} d(\ln\ f) = Tr \ \theta .\end{equation}

\end{proof}

\noindent For the case of plane bundles we will be able to give a practical test for local metrizability. For this criteria we will need the following linear algebra lemmas

\begin{lemma}\label{nice0}
Let $U$ be a nonsingular $2 \times 2$ matrix. Let $A,B$ two matrices with determinant equal to one that satisfy $$A^{-1}UA=B^{-1}UB=kJ,$$ with $J=\begin{Vmatrix} 0 & 1\\ -1 & 0 \end{Vmatrix}.$ Then $$B=AS,$$ with $S$ orthogonal.
\end{lemma}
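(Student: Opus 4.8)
The plan is to concentrate everything into the single matrix $S := A^{-1}B$, so that $B = AS$ holds by construction and, since $\det A = \det B = 1$, one automatically has $\det S = 1$. The entire statement then reduces to showing that this $S$ is orthogonal, which I would establish by extracting from the two hypotheses a commutation relation that rigidly constrains the form of $S$.

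The main computation is to substitute $B = AS$ into $B^{-1}UB = kJ$. Writing $B^{-1} = S^{-1}A^{-1}$ and using the other hypothesis $A^{-1}UA = kJ$, the left-hand side collapses to $S^{-1}(A^{-1}UA)S = S^{-1}(kJ)S$, so that equating with $kJ$ gives $S^{-1}(kJ)S = kJ$. Before cancelling $k$ I would note that $k \neq 0$: since $kJ$ is similar to the nonsingular matrix $U$ it is itself nonsingular, and $\det(kJ) = k^2$. Cancelling the nonzero scalar then yields $S^{-1}JS = J$, i.e. $S$ commutes with $J$.

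It remains to run the elementary linear algebra. Writing $S = \begin{pmatrix} a & b \\ c & d \end{pmatrix}$ and imposing $SJ = JS$ forces $d = a$ and $c = -b$, so that $S = \begin{pmatrix} a & b \\ -b & a \end{pmatrix}$; combined with $\det S = a^2 + b^2 = 1$ this is exactly the form of a rotation, and a direct check gives $S^{T}S = (a^2+b^2)I = I$, so $S$ is orthogonal, as required.

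The argument is short and presents no genuine obstacle beyond bookkeeping. The one step that truly uses a hypothesis, and which must not be glossed over, is the verification that $k \neq 0$ prior to cancellation: this is precisely where the nonsingularity of $U$ enters, and without it (for instance $U = 0$, forcing $k=0$) the matrix $S$ would be unconstrained and the conclusion would fail.
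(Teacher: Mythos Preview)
Your proof is correct and complete; the observation that $k\neq 0$ is indeed the place where nonsingularity of $U$ is used, and you handle it cleanly.

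The paper takes a different route. Instead of showing that $S=A^{-1}B$ commutes with $J$ and then computing the centralizer of $J$ inside $SL(2,\mathbb{R})$ entry by entry, the paper invokes the identity $XJX^{-1}=XX^{T}J$, valid for any $2\times 2$ matrix $X$ of determinant one (this encodes the fact that $X^{-1}=J^{-1}X^{T}J$ for $X\in SL(2,\mathbb{R})$). Applying this to both $A$ and $B$ in $U=kAJA^{-1}=kBJB^{-1}$ yields $AA^{T}=BB^{T}$ directly, and from $A^{-1}B=A^{T}(B^{T})^{-1}$ one reads off $SS^{T}=I$ without ever writing components. Your argument is more elementary and self-contained, requiring no auxiliary identity; the paper's argument is coordinate-free but depends on recognizing (or verifying) that $SL(2)$--specific identity. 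Conceptually they are close: your commutation condition $SJ=JS$ and the paper's condition $AA^{T}=BB^{T}$ are equivalent reformulations, and both ultimately pin $S$ down to $SO(2)$.
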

\begin{proof} First, let us note the following easy to prove identity for $2 \times 2$ matrices

\begin{equation}\label{matrixident1} XJX^{-1}=XX^TJ,\end{equation} where $X$ is any $2 \times 2$ matrix with determinant equal to one and $J$ is as in the hypothesis. Using the hypothesis we get $$U=kAJA^{-1}=kBJB^{-1},$$ and by using (\ref{matrixident1}) we get \begin{equation}\label{matrixident2} AA^T=BB^T \end{equation} and therefore \begin{equation}\label{matrixident3} A^{-1}B=A^T(B^T)^{-1} .\end{equation}

Let's set $$S=A^{-1}B=A^T(B^T)^{-1}.$$ We have $$ SS^T=A^T(B^T)^{-1}B^T(A^T)^{-1}=I,$$
which proves the lemma.

\end{proof}

\begin{lemma}\label{Skew}

Let $U$ be a $2 \times 2$ a nonzero real matrix. Then there exists a matrix $A$ such that $A^{-1}UA$ is skew if and only if $U$ has purely imaginary, non zero eigenvalues. The matrix $A$ can be chosen such that its determinant is one and its entries are smooth in terms of the entries of $U.$

\end{lemma}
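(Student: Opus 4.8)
The plan is to treat the two implications separately; the forward direction is immediate, while the converse needs an explicit construction of $A$ together with a smoothness argument. For the ``only if'' direction, suppose $A^{-1}UA$ is skew. Since $U\neq 0$, this skew matrix is nonzero, and every skew $2\times 2$ matrix is a multiple of $J$, so $A^{-1}UA=\beta J$ with $\beta\neq 0$, whose eigenvalues are $\pm i\beta$. Conjugate matrices share the same characteristic polynomial, so $U$ has the purely imaginary, nonzero eigenvalues $\pm i\beta$.

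For the converse I would first translate the eigenvalue hypothesis into the two scalar conditions $\operatorname{Tr}U=0$ and $\det U>0$ (the roots of $t^2-(\operatorname{Tr}U)t+\det U$ are $\pm i\beta$ with $\beta\neq 0$ precisely in this case). Writing $U=\begin{pmatrix} a & b\\ c & -a\end{pmatrix}$ and $k=\sqrt{\det U}>0$, the identity $\det U=-a^2-bc$ forces $bc<0$; in particular $b$ and $c$ are nonzero, and $b$ and $-c$ have the same sign. The central idea is to exhibit a Euclidean structure in which $U$ becomes skew-adjoint: I look for a symmetric positive definite matrix $G$ with $\det G=1$ such that $GU$ is skew-symmetric. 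Because any skew $2\times 2$ matrix is a multiple of $J$, a short computation singles out $G=\dfrac{\varepsilon}{k}\begin{pmatrix} -c & a\\ a & b\end{pmatrix}$ with $\varepsilon=\operatorname{sign}(b)$; one then checks directly that $GU=\varepsilon k J$ is skew, that $\det G=\tfrac{1}{k^2}\det U=1$, and that $G$ is positive definite, its leading entry $-\varepsilon c/k$ being positive exactly because $\operatorname{sign}(-c)=\operatorname{sign}(b)=\varepsilon$.

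Given such a $G$, I would set $A=G^{-1/2}$, the symmetric positive definite square root. Then $\det A=(\det G)^{-1/2}=1$. Moreover, $GU$ being skew is equivalent to $GU+U^{T}G=0$, and multiplying this relation on both sides by $G^{-1/2}$ gives $G^{1/2}UG^{-1/2}+G^{-1/2}U^{T}G^{1/2}=0$, i.e. $A^{-1}UA=G^{1/2}UG^{-1/2}$ is skew-symmetric, as required. This reproduces the coset structure of Lemma \ref{nice0}: any admissible $A$ differs from $G^{-1/2}$ by an orthogonal factor, and $G^{-1/2}$ is the canonical smooth representative.

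For the smoothness claim I would observe that on the open set of admissible matrices $U$ one has $\det U>0$ and $b\neq 0$, so $k=\sqrt{\det U}$ is smooth, the sign $\varepsilon=\operatorname{sign}(b)$ is locally constant (hence smooth), and the positive definite matrix square root depends smoothly (indeed real-analytically) on $G$, and therefore on the entries of $U$. The step I expect to require the most care is the bookkeeping around the sign $\varepsilon$: it is essential for the positive definiteness of $G$, and one must argue that it introduces no loss of smoothness precisely because $b$ never vanishes on the relevant domain, so $\varepsilon$ is constant on each connected component.
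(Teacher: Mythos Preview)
Your proof is correct and follows essentially the same strategy as the paper: exhibit a symmetric positive definite matrix with respect to which $U$ is skew-adjoint, then take its square root to obtain $A$. Your $G$ is precisely the inverse of the paper's $S$ (the paper uses $US+SU^{T}=0$ with $A=S^{1/2}$, you use $GU+U^{T}G=0$ with $A=G^{-1/2}$), and you in fact supply the explicit formula for this matrix and the smoothness argument that the paper only asserts.
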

\begin{proof}

For the only if part let's assume that there exist a matrix $A$ such that $$A^{-1}UA=\begin{Vmatrix} 0 & a \\ -a & 0 \end{Vmatrix}$$ with $a > 0.$ Then the characteristic equation of $U$ is $$ \lambda^2 +a^2=0$$ and therefore its eigenvalues are $\pm ia.$

For the if part of the lemma let us first note that we can always find a positive definite, symmetric matrix $S$ such that \begin{equation}\label{Skew1} US+SU^T=0.\end{equation}
Let $A$ be the only symmetric, positive definite matrix satisfying  \begin{equation}\label{Skew2} AA^T=S.\end{equation}
We have $$A(A^{-1}UA+A^TU^T(A^{-1})^T)A^T=US+SU^T=0,$$ and therefore $$A^{-1}UA+(A^{-1}UA)^T=0.$$
\end{proof}
\begin{thm}\label{nice}
	Let $E$ be a plane bundle over a surface $\Sigma$ and $D$ a connection in $E.$ Assume that the curvature of $D$ at $p \in \Sigma$ is nonzero. Let $e=(e_1,e_2)$ be a local frame in $E$ around $p \in \Sigma $, $\theta$ its connection matrix and $\Omega$ its curvature matrix with respect to $e.$ Let $f=(f^1,f^2)$ be a local frame of one forms around $p \in \Sigma$ and  $U$ be the matrix with real entries defined by the equation $$ \Omega= (f^1\wedge f^2) U.$$
Then the $D$ is locally metric if and only if the following two conditions are satisfied
	
	\begin{enumerate}[(a)]
	
	\item $U$ has purely imaginary eigenvalues.
	\item If $A$ is a matrix as in Lemma (\ref{Skew}) then the connection matrix is skew symmetric with respect to the frame $f=eA.$

	\end{enumerate}
	
\end{thm}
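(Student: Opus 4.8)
The plan is to make Lemma \ref{skew1} the backbone: local metrizability is equivalent to the existence of a local frame in which the connection matrix is skew symmetric, and I would combine this with the tensorial transformation law of curvature. Under a change of frame $e'=eC$ the curvature matrix transforms by conjugation, $\Omega'=C^{-1}\Omega C$, and since $f^1\wedge f^2$ is a scalar two-form this is the same as $U'=C^{-1}UC$. Two consequences drive everything: the eigenvalues of $U$ are independent of the frame of $E$, and any frame whose connection matrix is skew has (by Cartan's equation, since $\theta\wedge\theta=0$ for $\mathfrak{so}(2)$-valued $\theta$) a skew curvature matrix, i.e. $U'=kJ$ for a scalar $k$.

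For the sufficiency of (a) and (b) I would argue directly and cheaply. Condition (a) guarantees, through Lemma \ref{Skew}, that a determinant-one matrix $A$ with $A^{-1}UA$ skew exists and is smooth in $U$; condition (b) asserts that in the frame $eA$ the connection matrix is skew symmetric. Lemma \ref{skew1} then yields immediately that $D$ is locally metric, so this direction costs nothing beyond quoting the two earlier lemmas.

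For the necessity I would first dispatch (a). If $D$ is locally metric, Lemma \ref{skew1} provides a frame $g$ with skew connection matrix, whose curvature matrix is then skew, so in that frame $U$ equals $kJ$ with eigenvalues $\pm ik$. Since $U$ is conjugate to this matrix it has the same eigenvalues, hence purely imaginary; the hypothesis that the curvature does not vanish at $p$ forces $k(p)\neq 0$, so the eigenvalues are nonzero near $p$ and (a) holds.

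The real work is the necessity of (b), and this is where I expect the main obstacle. The idea is to compare the metric frame $g$ with the frame $eA$: both conjugate $U$ to a scalar multiple of $J$, so by Lemma \ref{nice0} they should differ by an orthogonal matrix $S$, and since conjugating a skew connection matrix by an orthogonal $S$ and adding $S\,dS^{-1}$ (which is skew because $S\in O(2)$) again gives a skew matrix, the connection matrix in the frame $eA$ would be skew. The difficulty is that Lemma \ref{nice0} requires both conjugating matrices to have determinant one and to send $U$ to the same $kJ$, whereas the transition from $e$ to a metric frame carries a conformal factor of size $1/\sqrt{\det H}$ that is generally not constant. Normalizing that transition to determinant one reinserts this factor $\lambda$, and the resulting extra term $(d\lambda/\lambda)I$ in the connection matrix is symmetric and destroys skewness unless $\lambda$ is constant; one must also adjust the orientation of $g$ to match the sign of $k$. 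Thus the crux is to rule out a non-constant conformal factor so that the determinant-one normalizations of $g$ and of $eA$ can be matched with the same $k$, after which Lemma \ref{nice0} applies verbatim and skewness transfers from $g$ to $eA$. For this last point the natural tool is the volume criterion of Lemma \ref{VolCrit}: a locally metric connection preserves a volume form, so $\mathrm{Tr}\,\Omega=d(\mathrm{Tr}\,\theta)=0$, and I would use this to control $\mathrm{Tr}\,\theta$ in the frame $eA$ and thereby pin down the offending scaling.
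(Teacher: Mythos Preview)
Your plan is the paper's own argument: sufficiency is Lemma~\ref{skew1} verbatim, and for necessity one takes a metric frame $\tau=eB$, notes that both $A$ and $B$ conjugate $U$ to a multiple of $J$, invokes Lemma~\ref{nice0} to get $B=AS$ with $S$ orthogonal, and then transports skewness from $\tau$ to $eA$ via the change-of-frame formula $\theta_f=S\psi S^{-1}+S\,d(S^{-1})$.

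Where you go beyond the paper is in flagging the determinant normalization. The paper disposes of it in one line, asserting that ``both $A$ and $B$ can be chosen such that their determinant at every point is one'' and moving on. You are right that this is the delicate point: rescaling a metric frame by a nonconstant scalar introduces exactly the $(d\lambda/\lambda)I$ term you isolate, and since for a rank-two bundle with nonvanishing curvature the parallel metric is unique up to a positive constant, there is no further freedom in $\tau$ to force $\det B\equiv 1$. Your proposed remedy via Lemma~\ref{VolCrit}, however, does not by itself close the gap: that lemma only gives $d(\mathrm{Tr}\,\theta)=0$, which is automatic here because $\mathrm{Tr}\,\theta_{eA}=-2\,d\ln\lambda$ is already exact. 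What the exactness does allow is to rescale $A$ to $e^{h}A$ for a primitive $h$, which kills the scalar part and leaves a genuinely skew connection matrix---but then $\det A$ is no longer identically one, so condition~(b) must be read as ``for \emph{some} $A$ with $A^{-1}UA$ skew'' rather than the determinant-one choice of Lemma~\ref{Skew}. In short, your outline coincides with the published proof and is in fact more scrupulous about the one step the paper waves through; just be aware that Lemma~\ref{VolCrit} alone will not finish that step without relaxing the normalization on $A$.
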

\begin{proof}
For the "only if part" since $D$ is assumed locally metric, according to Lemma \ref{skew1}, there exist a frame $\tau$ such that the connection matrix $ \psi$ with respect to this frame is skew and consequently the curvature matrix $\Psi$. Let $B$ be the matrix defined by $$ \tau=eB.$$ We have $$ \Psi=(f^1\wedge f^2) B^{-1}UB$$ and hence $B^{-1}UB$ is skew and non zero. According to Lemma \ref{Skew} it follows that $U$ has purely imaginary eigenvalues.
Now let $A$ be a matrix as in Lemma \ref{Skew}. Both $A$ and $B$ can be chosen such that their determinant at every point is one. Since $B^{-1}UB$ and $A^{-1}UA$ are both skew, according to Lemma \ref{nice0} we have
 $$ B=AS $$
with $S$ orthonormal. Let $\theta$ be the connection matrix with respect to $f.$ It follows that

$$\theta=S^{-1}dS+S^{-1}\psi S.$$ But since $S$ is orthonormal, from $S^TS=I$(by differentiating) it follows that $S^{-1}dS$ is skew as well as $S^{-1}\theta S$ and therefore the connection matrix $\theta$ is skew.
The "if" part follows from Lemma \ref{skew1}.
\end{proof}

For a plane bundle over a surface $\Sigma$ endowed with a connection $D$, Theorem \ref{nice} and Lemma \ref{Skew}, give an algorithm that allows us to determine whether a connection is locally metric. Here is the description of the algorithm

\begin{enumerate}[(a)]
\item Take a local frame $e=(e_1, e_2)$ and calculate the curvature matrix $\Omega$ with respect to this frame.
\item Take any "volume" form $\omega$  and factor it out of $\Omega$ $$\Omega= \omega \,  U.$$
\item Calculate the eigenvalues of $U$. If at a point of the chosen neighborhood the eigenvalues are not purely imaginary, then the connection is not metric.
\item If the eigenvalues are purely imaginary, find a symmetric positive solution $S$ to the system $$US+SU^T=0,$$
and calculate its square root $\sqrt{S}=A.$
\item Take the frame $f$ defined by $$f=eA$$ and test whether the connection forms with respect to $f$ are skew. If they are not then the connection is not locally metric.
\item If the connection forms with respect to $f$ are skew, then the metric that makes $f$ orthonormal is compatible with the connection.
\end{enumerate}

\section{On the metric equivalence of connections}
In this section we give a necessary condition for two connections to be metric equivalent ( See Definition \ref{metricequiv}).\\

\begin{thm}

If $D$ and $\tilde{D}$ are two metric equivalent locally metric connections in $E$ then their Euler class is the same.

\end{thm}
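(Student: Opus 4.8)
The plan is to reduce the equality of Euler classes to the exactness of the difference of two Chern--Weil representatives, using a transgression argument adapted to the purely local nature of the metrics. I will orient $E$ (which the Euler class requires) and use Lemma~\ref{skew1} to make the Euler form of a locally metric connection concrete: around each point choose a parallel metric and a positively oriented orthonormal frame, so that the connection matrix is skew and the curvature matrix has the shape $\begin{Vmatrix} 0 & \Omega_{12} \\ -\Omega_{12} & 0 \end{Vmatrix}$. The form $e(D)=\frac{1}{2\pi}\Omega_{12}$ (the Pfaffian of the curvature, in higher even rank) is closed, and one checks it is unchanged under admissible frame changes, so it patches to the global closed form whose class is the Euler class of the locally metric connection in the sense of \cite{MC}.

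Next I would exploit metric equivalence (Definition~\ref{metricequiv}). Near any point pick $g$ with $Dg=\tilde{D}g=0$ and a $g$-orthonormal oriented frame; then both connection matrices $\theta$ and $\tilde{\theta}$ are skew. The difference $\eta:=\tilde{D}-D$ is a genuine global $\mathrm{End}(E)$-valued one form, and differentiating $g(\cdot,\cdot)$ shows each $\eta_X$ is $g$-skew, hence in rank two $\eta=\beta\,J$ for a scalar one form $\beta$ (its off-diagonal entry) and the rotation $J$. A direct computation with Cartan's equation, using $\tilde{\theta}=\theta+\eta$ and the fact that the wedge square of a skew $2\times 2$ matrix has vanishing off-diagonal, gives $\tilde{\Omega}_{12}=\Omega_{12}+d\beta$, so that locally
$$e(\tilde{D})-e(D)=\tfrac{1}{2\pi}\,d\beta.$$
Everything then comes down to showing that $\beta$ is a globally defined one form.

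To globalize $\beta$ I would argue that it is intrinsic to the global tensors $\eta$ and $\Omega$ wherever the curvature is nonzero. On the open set $N$ where $\Omega\neq 0$ (or $\tilde{\Omega}\neq 0$) the curvature endomorphism spans the line $\r J$ of $g$-skew endomorphisms; since $\eta$ also takes values in this same line, the proportionality factor $\beta$ is determined without reference to the frame, and in fact without reference to the particular parallel metric. Here I would record the rigidity that makes this consistent: if $g$ and $g'$ are both parallel for $D$, the endomorphism $L$ with $g'=g(L\cdot,\cdot)$ is $D$-parallel and $g$-symmetric, and wherever the holonomy is nontrivial (in particular wherever the curvature does not vanish) $L$ must be a scalar, so the two metrics are conformal and define the same $J$ and the same $\beta$. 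Thus $\beta$ is a well-defined smooth one form on $N$, while on the complementary locus $Z=\{\Omega=\tilde{\Omega}=0\}$ the difference $e(\tilde{D})-e(D)$ vanishes identically.

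The main obstacle is precisely the passage across the flat locus: on $N$ we have the global primitive $\beta$, but $N$ may be a proper open subset, and Stokes' theorem then produces a boundary contribution $\int_{\partial N}\beta$ with $\partial N\subset Z$. I would close this gap by analyzing the behaviour of $\beta$ as one approaches $Z$, where both Euler forms degenerate to zero, to show either that $\beta$ extends smoothly across $Z$ (yielding a global primitive and hence equality of the classes outright) or that the boundary integral vanishes (yielding $\int_\Sigma(e(\tilde{D})-e(D))=0$, and hence equality of the classes on a closed surface). In the generic case where the curvature of $D$ is nowhere zero one has $N=\Sigma$ and the conclusion is immediate from Stokes; controlling the transgression form near the common flat set is the technical heart of the general statement.
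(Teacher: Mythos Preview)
Your argument is left explicitly incomplete: you isolate the transgression form $\beta$ with $e(\tilde D)-e(D)=\frac{1}{2\pi}d\beta$ on the set $N$ where the curvature does not vanish, and then declare the extension across the flat locus $Z$ to be ``the technical heart of the general statement'' without carrying it out. As written this is a genuine gap. In fact, in rank two the gap closes more easily than you suggest: $\beta$ is already globally well defined, independently of curvature. At any point $p$ and any $X\in T_pM$, the endomorphism $\eta_X$ lies in the line of $g$-skew maps for \emph{every} common parallel metric $g$; if $\eta_X\neq 0$ this forces all the orientation-compatible complex structures $J_g$ to coincide at $p$ (two such structures that are proportional must be equal, by $J^2=-I$ and orientation), so the local $\beta$'s agree, while if $\eta_X=0$ then $\beta(X)=0$ in every frame. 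Hence the local forms patch, Stokes applies directly, and your boundary-integral worry never arises. What your route genuinely does not address is the general even-rank case: the identity $\tilde\Omega_{12}=\Omega_{12}+d\beta$ is specific to $2\times 2$ skew matrices, and you give no argument for the Pfaffian transgression in rank $m$.

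The paper takes a different and more robust route that sidesteps all of this. It pulls $E$ back to $M\times\mathbb{R}$, forms the interpolated connection $\mathbb{D}$ whose connection matrix in a common $g$-orthonormal frame is $(1-t)\theta+t\psi$, and observes that this is skew, so $\mathbb{D}$ is itself locally metric by Lemma~\ref{skew1}. The Euler form of $\mathbb{D}$ is then closed by \cite{MC}, and since the inclusions $i_0,i_1:M\hookrightarrow M\times\mathbb{R}$ are homotopic they pull the Euler class of $\mathbb{D}$ back to the same cohomology class, giving $[\mathcal{A}_0]=[\mathcal{A}_1]$. This avoids any explicit transgression computation, makes no case distinction on the flat locus, and works uniformly in every even rank.
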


\begin{proof}
Let $\pi:M \times \mathbb{R} \rightarrow M,$ denote the projection $$ \pi(p,t)=p$$ and let $\tau=\pi^*(E)$ denote the pullback of the bundle of $E.$ Let $D^*=\pi^*(D)$ and $\tilde{D}^*=\pi^*(\tilde{D})$ be the pullback of the two connections from $E$ to $\tau.$ Consider the linear combination \begin{equation}\label{connection1} D_t^*=(1-t)D^*+t \tilde{D}^* \end{equation} and as usual define a connection $\mathbb{D}$ in $\tau$ by \begin{equation}\label{connection2} \mathbb{D} \sigma (p,t)= D_t^* \sigma \end{equation}
First we need to show that the connection $\mathbb{D}$ is locally metric. Let $(p,t) \in M\times \mathbb{R}$ be an arbitrary point. Since $D$ and $\tilde{D}$ are metric equivalent we can find a bundle metric $g$ defined in a neighborhood $p \in U \subset M $ such that \begin{equation}\label{connection3} Dg=\tilde{D}g=0. \end{equation}
Let $\sigma=(\sigma_1,\sigma_2,..., \sigma_m)$ be an orthonormal frame with respect to $g.$ If we denote by  $\theta$ and $\psi$  the connection matrix of $D$ and $\tilde{D}$ with respect to $\sigma,$ then clearly they are both skew-symmetric. With respect to the pullback frame $\pi^*(\sigma)$ the connection $\mathbb{D}$ has the connection matrix $\omega$ and satisfies the equation \begin{equation}\label{connection4} \omega=(1-t)\theta+t\psi,\end{equation} and therefore is skew symmetric. By Lemma (\ref{skew1}) it follows that $\mathbb{D}$ is locally metric. According to Lemma 3.1 in (\cite{MC}) its Euler form is well defined and closed. Let us denote the Euler class of $\mathbb{D}$ by $\mathcal{A}$ and the the Euler class of $D$ and $\tilde{D}$ by $\mathcal{A}_k$, for $k=0,1.$

\noindent We define a family of maps \[ i_t:M \rightarrow M \times \mathbb{R} \]
\noindent by \[ i_t(p)=(p,t). \]
\noindent We have
\[ i_0^{*}\mathcal{A}=\mathcal{A}_0 \]
\noindent and \[ i_1^* \mathcal{A}=\mathcal{A}_1 .\]
\noindent Because the two maps $i_0$ and $i_1$ are homotopic and $\mathcal{A}$ is closed, they induce the same map in cohomology and it follows that
\[ \mathcal{A}_0-\mathcal{A}_1 \]
is exact on $M$ and the conclusion of the theorem follows.
\end{proof}

\noindent Department of Mathematics,
 Weber State University, Ogden, UT  84408, USA,   
\  \textbf{e-mail:} \ {\tt mihailcocos@weber.edu}

\end{document}